\newtheorem{definition}{Definition}[section]
\newtheorem{proposition}{Proposition}[section]
\title{Energy-based error bound of physics-informed \\ neural network solutions in elasticity}
\author{  \href{https://orcid.org/0000-0002-5541-437X}{\includegraphics[scale=0.06]{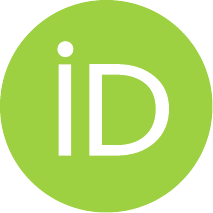}Mengwu Guo}\thanks{Corresponding author.}\\
	Department of Applied Mathematics \\
	University of Twente \\
	\texttt{m.guo@utwente.nl} \\
	\And 
	\href{https://orcid.org/0000-0003-2659-0507}{\includegraphics[scale=0.06]{orcid.pdf}Ehsan Haghighat}\thanks{The two authors contributed equally to this work.} \\
	Department of Civil and Environmental Engineering \\
	Massachusetts Institute of Technology \\
	\texttt{ehsanh@mit.edu} \\
}
\date{}
\begin{document}
\maketitle

\vspace{-10mm}
\begin{tcolorbox}
	A later version has been published in the \emph{Journal of Engineering Mechanics}. Cite the paper as:
	
	M. Guo and E. Haghighat. Energy-based error bound of physics-informed neural network solutions in elasticity. \emph{Journal of Engineering Mechanics}, 148(8):04022038, 2022. \textbf{DOI}: 10.1061/(ASCE)EM.1943-7889.0002121
	
	\smallskip
	This material may be downloaded for personal use only. Any other use requires prior permission of the American Society of Civil Engineers. This material may be found at \url{https://doi.org/10.1061/(ASCE)EM.1943-7889.0002121}.

\end{tcolorbox}

\vspace{3mm}

 \begin{abstract}
   An energy-based \emph{a posteriori} error bound is proposed for the physics-informed neural network solutions of elasticity problems. An admissible displacement-stress solution pair is obtained from a mixed form of physics-informed neural networks, and the proposed error bound is formulated as the constitutive relation error defined by the solution pair. Such an error estimator provides an upper bound of the global error of neural network discretization. The bounding property, as well as the asymptotic behavior of the physics-informed neural network solutions, are studied in a demonstrating example.
 \end{abstract}

\keywords{physics-informed neural network  \and constitutive relation error  \and \emph{a posteriori} error estimation \and machine learning}

\section{Introduction}

During the past decade, the application of deep neural networks, also known as deep learning, has gained a significant momentum for a variety of tasks including image classification \cite{deng2009imagenet,krizhevsky2012imagenet}, speech recognition \cite{graves2013speech,amodei2016speech}, autonomous driving \cite{sallab2017driving,grigorescu2020driving}, and e-commerce \cite{ha2016ecommerce,shankar2017ecommerce}, to name a few, see \cite{lecun2015deep,goodfellow2016deep} for more details. In a recent study, it was shown that neural networks can also be used for the solution and identification of partial differential equations \cite{raissi2019pinn,raissi2018hidden}. To that end, the solution space is constructed through deep neural network approximations and the partial derivatives are evaluated using automatic differentiation \cite{baydin2017automatic}. A loss function, in the form of a mean-squared error norm (MSE), is constructed to include the differential equations as well as the initial and boundary conditions. Minimization of such a loss function on a sampling grid results in an approximate solution to the problem under study. This approach is now commonly known as \emph{physics-informed neural networks} (PINNs), and the preliminary studies in \cite{raissi2019pinn,raissi2020pinn} have driven a great attention to this approach.

More recently, the methodology of PINNs has been widely applied in the contexts of both forward and inverse problems of fluid mechanics \cite{raissi2020pinn,yang2019fluid,jin2020fluid,mao2020fluid}, solid mechanics \cite{haghighat2020solid,rao2020solid}, heat transfer \cite{zhao2020prediction,haghighat2020sciann}, flow in porous media \cite{kadeethum2020biot,bekele2020biot} and so on. Moreover, the PINNs have also been investigated in their variational or fractional forms \cite{kharazmi2019variational,kharazmi2020varitionalhp,pang2019fractionals,yang2018generative}. In most of these studies, the PINN methodology is merely used as a tool and adopted to different applications. There is, however, a clear need for fundamental analysis of this methodology and its performance. With the aid of an energy-based \emph{a posteriori} error bound, we aim to preliminarily investigate the generalization errors and asymptotic behaviors of the PINN solutions of elasticity problems in this work.

To quantify the discretization error in an approximate solution, \emph{a posteriori} error estimation has been intensively studied for finite element methods \cite{ainsworth2011posteriori}. Several families of \emph{a posteriori} error estimators have been formulated, such as the explicit residual-based error estimator \cite{babuvska1978posteriori}, the implicit residual-based error estimator \cite{bank1985some}, the recovery-based error estimator \cite{zienkiewicz1987simple}, the hierarchical estimator \cite{deuflhard1989concepts}, and the constitutive relation error (CRE) estimator \cite{ladeveze1983error}. Among these existing error estimators, the CRE is claimed to provide guaranteed, rigorous, energy-based bounds of the discretization errors in finite element solutions \cite{ladeveze2005mastering,ladeveze2016constitutive}. In this work, we consider a mixed form of PINN approximation for elasticity problems, where the CRE estimation can be naturally employed to formulate an energy-based upper bound of the discretization errors by neural networks. Asymptotic behaviors of the PINN approximation can thus be observed and assessed through the energy-based error bounds given by the CRE.

Following the introduction, a model problem of elasticity and its mix form of PINN solution is introduced in Section 2. An energy-based error bound is formulated for the model problem based on the CRE estimation in Section 3. In Section 4, the proposed error bound is demonstrated by a 2D elasticity problem, and its asymptotic behaviors are discussed. Finally, conclusions drawn in Section 5.

\section{Model problem and a physics-informed neural network solution}
\subsection{Model problem of elasticity}

Consider an elastic body whose undeformed configuration $\boldsymbol{X}$ is defined in a domain $\Omega\subset \mathbb{R}^d$ ($d=1,2,3$) with a Lipschitz boundary $\Gamma=\Gamma_D\cup \Gamma_N$, where $\Gamma_D\neq \emptyset$ is the Dirichlet boundary, $\Gamma_N$ is the Neumann boundary, and $\Gamma_D\cap\Gamma_N=\emptyset$. The elastic body is subjected to a prescribed body force of density $\boldsymbol{f}\in [L^2(\Omega)]^d$ in $\Omega$ with respect to the undeformed volume, a prescribed displacement $\boldsymbol{u}_D$ on $\Gamma_D$, and a prescribed surface force of density $\boldsymbol{t}\in [L^2(\Gamma_N)]^d$ with respect to the undeformed surface area. An elasticity problem is seek to find the the vector field of displacements $\boldsymbol{u}:\Omega \to \mathbb{R}^d$ and the stress tensor field $\boldsymbol{\sigma}: \Omega \to \mathbb{R}^{d\times d}$ that satisfy (i) the compatibility condition: 
\begin{equation}\label{comp}
\boldsymbol{u}\in [H_1(\Omega)]^d\quad \text{and}\quad \boldsymbol{u}=\boldsymbol{u}_D\quad \mathrm{on}\;\Gamma_D\,,
\end{equation}
i.e. the continuity of the displacement field $\boldsymbol{u}$ and  the Dirichlet boundary condition; (ii) the equilibrium condition:
\begin{equation}\label{equi}
\mathrm{div}\boldsymbol{\sigma}+\boldsymbol{f}=\boldsymbol{0}\quad\text{and}\quad \mathrm{in}\;\Omega\,,\quad \boldsymbol{\sigma}\boldsymbol{n}=\boldsymbol{t}\quad \mathrm{on}\;\Gamma_N\,,
\end{equation}
i.e. the momentum equation and the Neumann boundary condition; as well as (iii) the elastic constitutive relation:
\begin{equation}\label{cr}
\boldsymbol{\sigma}=\mathcal{C}[\nabla\boldsymbol{u}]\,,
\end{equation}
in which $\mathcal{C}: \mathbb{R}^{d\times d}\to \mathbb{R}^{d\times d}$ denotes the constitutive relation that maps the displacement gradient $\nabla\boldsymbol{u}$ to the stress tensor $\boldsymbol{\sigma}$. Note that $\boldsymbol{\sigma}$ is taken as the Cauchy stress tensor in linear elasticity under small deformation while more generally considered as the first Piola-Kirchhoff stress tensor in hyperelasticity under finite deformation. The constitutive relation $\mathcal{C}$ can be expressed in the following form in hyperelasticity:
\begin{equation}
\boldsymbol{\sigma} = \partial W(\nabla\boldsymbol{u})/\partial (\nabla\boldsymbol{u})\,,
\end{equation}
where $W:\mathbb{R}^{d\times d}\to \mathbb{R}$ is the potential energy density of the considered material. As a special case of hyperelasticity, linear elasticity has the potential energy density in the form of $W = \frac{1}{2}(\nabla\boldsymbol{u}):\boldsymbol{K}:(\nabla\boldsymbol{u})$, $\boldsymbol{K}$ being the Hooke's stiffness tensor, and the constitutive relation is the Hooke's law $\boldsymbol{\sigma} = \boldsymbol{K}:\nabla\boldsymbol{u}$.

Moreover, for the follow-up discussions on error bounding, the complementary energy density $W^*:\mathbb{R}^{d\times d}\to \mathbb{R}, \vb*{\tau}\mapsto W^*(\vb*{\tau})$ is introduced as the Legendre transformation of $W$, i.e.,
\begin{equation}\label{lt}
W^*(\vb*{\tau})=\sup_{\vb*{\epsilon}\in \mathbb{R}^{d\times d}}\left\{ \vb*{\tau}:\vb*{\epsilon} - W(\vb*{\epsilon})  \right\}\,.
\end{equation}
Assuming that the potential energy density $W$ is a convex function, the constitutive relation \eqref{cr} can thus be alternatively written as
\begin{equation}
W(\nabla \vb*{u}) + W^*(\vb*{\sigma}) - \vb*{\sigma}:\nabla \vb*{u} =0\,.
\end{equation}

\subsection{A mixed form of PINN solutions}
In this work, we use separate neural networks to approximate the components of both the displacement field $\vb*{u}$ and the stress field $\vb*{\sigma}$. Taking the case of $d=2$ as an example, one has
\begin{equation}
\vb*{u}(\vb*{X})\simeq \vb*{u}^\texttt{NN}(\vb*{X})=\begin{bmatrix}
u_x^\texttt{NN}(\vb*{X})\\ u_y^\texttt{NN}(\vb*{X})
\end{bmatrix}\,,\quad
\vb*{\sigma}(\vb*{X})\simeq \vb*{\sigma}^\texttt{NN}(\vb*{X})=\begin{bmatrix}
\sigma_{xx}^\texttt{NN}(\vb*{X}) & \sigma_{xy}^\texttt{NN}(\vb*{X}) \\ \sigma_{yx}^\texttt{NN}(\vb*{X}) & \sigma_{yy}^\texttt{NN}(\vb*{X}) 
\end{bmatrix}\,.
\end{equation}
Note that $\sigma_{xy}= \sigma_{yx}$ for linear elasticity. To train these neural networks informed by the governing equations \eqref{comp}, \eqref{equi}, and \eqref{cr}, a loss function $\mathcal{L}$ can be formulated as
\begin{equation}\label{MSE}
\begin{split}
\mathcal{L}= &~  \texttt{MSE}_{\Gamma_D} + \texttt{MSE}_{\vb*{f}} +  \texttt{MSE}_{\Gamma_N} + \eta \texttt{MSE}_{\mathcal{C}} + \alpha(\texttt{MSE}_{\vb*{u}}+\texttt{MSE}_{\vb*{\sigma}})\,,\\
\texttt{MSE}_{\Gamma_D} = & ~ \frac{1}{N_{\Gamma_D}} \sum_{k=1}^{N_{\Gamma_D}}\|\vb*{u}^\texttt{NN }(\vb*{X}_k|_{\Gamma_D}; \vb*{\theta})-\vb*{u}_D(\vb*{X}_k|_{\Gamma_D})\|^2_2\,,\\
\texttt{MSE}_{\vb*{f}} = & ~ \frac{1}{N_\Omega} \sum_{k=1}^{N_\Omega}\|\mathrm{div}\vb*{\sigma}^\texttt{NN}(\vb*{X}_k|_{\Omega})+\vb*{f}(\vb*{X}_k|_{\Omega})\|_F^2\,,\\
\texttt{MSE}_{\Gamma_N} = & ~ \frac{1}{N_{\Gamma_N}} \sum_{k=1}^{N_{\Gamma_N}}\|\vb*{\sigma}^\texttt{NN}(\vb*{X}_k|_{\Gamma_N})\vb*{n}-\vb*{t}(\vb*{X}_k|_{\Gamma_N})\|_2^2\,,\\
\texttt{MSE}_{\mathcal{C}} =  ~ & \frac{1}{N_\Omega}\sum_{k=1}^{N_\Omega}\|\vb*{\sigma}^\texttt{NN}(\vb*{X}_k|_{\Omega})-\mathcal{C}[\nabla\vb*{u}^\texttt{NN}(\vb*{X}_k|_{\Omega})]\|_F^2\,,\\
\texttt{MSE}_{\vb*{u}} = & ~\frac{1}{N_\Omega}\sum_{k=1}^{N_\Omega}\|\vb*{u}^\texttt{NN}(\vb*{X}_k|_\Omega)-\vb*{u}(\vb*{X}_k|_\Omega)\|_2^2\,,\\
\texttt{MSE}_{\vb*{\sigma}} =  ~ & \frac{1}{N_\Omega}\sum_{k=1}^{N_\Omega}\|\vb*{\sigma}^\texttt{NN}(\vb*{X}_k|_\Omega)-\vb*{\sigma}(\vb*{X}_k|_\Omega)\|_F^2\,.
\end{split}
\end{equation}
where $\{\vb*{X}_1|_\Omega,\cdots,\vb*{X}_{N_\Omega}|_\Omega\}$, $\{\vb*{X}_1|_{\Gamma_N},\cdots,\vb*{X}_{N_{\Gamma_N}}|_{\Gamma_N}\}$ and $\{\vb*{X}_1|_{\Gamma_D},\cdots,\vb*{X}_{N_{\Gamma_D}}|_{\Gamma_D}\}$ are the collocation points over the domain $\Omega$ and those along the boundaries $\Gamma_N$ and $\Gamma_D$, respectively. $\eta\in \mathbb{R}^+$ is the penalty coefficient for imposing the constitutive relation \eqref{cr}. $\alpha$ is a 0-1 binary variable: the case  $\alpha=0$ is a pure boundary value problem (BVP), while the case $\alpha=1$ considers the enhancement by the collocation data of $\vb*{u}$ and $\vb*{\sigma}$ over $\Omega$, i.e., the formulation \eqref{MSE} gives a physics-informed regression with the PDE constraints \eqref{comp}, \eqref{equi} and \eqref{cr} when $\alpha = 1$ .

\section{An energy-based error bound}

\subsection{Constitutive relation error}

\begin{definition}[CRE]
	A constitutive relation error (CRE) \cite{ladeveze2005mastering} is a functional $\Psi:KA\times SA\to\mathbb{R}$ defined as follows:
	\begin{equation}\label{CREhyp}
	\Psi(\hat{\vb*{u}},\hat{\vb*{\sigma}})=\int_\Omega \left[W(\nabla\hat{\vb*{u}})+W^*(\hat{\vb*{\sigma}})- \hat{\vb*{\sigma}}:\nabla\hat{\vb*{u}}\right]\dd\Omega \,,\quad  (\hat{\vb*{u}},\hat{\vb*{\sigma}})\in KA\times SA\,,
	\end{equation}
	where $KA = \{\hat{\vb*{u}}\in [H^1(\Omega)]^d: \hat{\vb*{u}}(\Gamma_D)=\vb*{u}_D~\text{a.e.}\}$ is the set of all kinematically admissible solutions that satisfy the compatibility condition \eqref{comp}, and $SA=\{\hat{\vb*{\sigma}}\in [H(\mathrm{div}, \Omega)]^d: \mathrm{div}\hat{\vb*{\sigma}}+ \vb*{f}=\vb{0}~ \text{a.e. in }\Omega, ~ \hat{\vb*{\sigma}}\vb*{n}=\vb*{t}~\text{a.e. on }\Gamma_N\}$ is the set of all statically admissible solutions that satisfy the equilibrium condition \eqref{equi}.
\end{definition}

A natural result of the Legendre transformation \eqref{lt} is the Fenchel-Young inequality $W(\vb*{\epsilon})+W^*(\vb*{\tau})-\vb*{\tau}:\vb*{\epsilon}\geq 0$, $\forall (\vb*{\epsilon},\vb*{\tau})\in \mathbb{R}^{d\times d}\times \mathbb{R}^{d\times d}$, which further gives the following proposition.

\begin{proposition}
	$\Psi(\hat{\vb*{u}},\hat{\vb*{\sigma}})\geq 0$, $ \forall (\hat{\vb*{u}},\hat{\vb*{\sigma}})\in KA\times SA$. 
\end{proposition}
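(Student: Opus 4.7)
The plan is to deduce the proposition directly from the Fenchel--Young inequality stated immediately before the statement, by applying it pointwise and then integrating over $\Omega$. Since this inequality is itself an immediate consequence of the definition \eqref{lt} of the Legendre transform, I would begin by reminding the reader of its derivation: for any fixed $\vb*{\tau}\in\mathbb{R}^{d\times d}$, the supremum in \eqref{lt} dominates the value attained at any particular $\vb*{\epsilon}$, so
\begin{equation*}
W^*(\vb*{\tau}) \geq \vb*{\tau}:\vb*{\epsilon} - W(\vb*{\epsilon}), \qquad \forall\, \vb*{\epsilon}\in\mathbb{R}^{d\times d},
\end{equation*}
which rearranges to $W(\vb*{\epsilon})+W^*(\vb*{\tau})-\vb*{\tau}:\vb*{\epsilon}\geq 0$. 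Note that this holds for \emph{all} pairs $(\vb*{\epsilon},\vb*{\tau})$, independently of whether they are related by the constitutive law; convexity of $W$ is not needed for the inequality itself, only for equality to characterize the constitutive relation.

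Next, I would fix an admissible pair $(\hat{\vb*{u}},\hat{\vb*{\sigma}})\in KA\times SA$. Because $\hat{\vb*{u}}\in [H^1(\Omega)]^d$, the gradient $\nabla\hat{\vb*{u}}(\vb*{X})\in\mathbb{R}^{d\times d}$ is defined for almost every $\vb*{X}\in\Omega$, and $\hat{\vb*{\sigma}}(\vb*{X})\in\mathbb{R}^{d\times d}$ is likewise defined a.e. Applying the Fenchel--Young inequality pointwise with $\vb*{\epsilon}=\nabla\hat{\vb*{u}}(\vb*{X})$ and $\vb*{\tau}=\hat{\vb*{\sigma}}(\vb*{X})$ gives
\begin{equation*}
W(\nabla\hat{\vb*{u}}(\vb*{X}))+W^*(\hat{\vb*{\sigma}}(\vb*{X}))-\hat{\vb*{\sigma}}(\vb*{X}):\nabla\hat{\vb*{u}}(\vb*{X})\geq 0 \quad \text{a.e. in } \Omega.
\end{equation*}
Integration of a nonnegative measurable integrand over $\Omega$ yields a nonnegative number, so $\Psi(\hat{\vb*{u}},\hat{\vb*{\sigma}})\geq 0$, which is the claim.

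There is essentially no serious obstacle here; the only points requiring a little care are technical rather than conceptual. One should verify that the integrand in \eqref{CREhyp} is measurable and integrable so that the integral is well defined as an element of $\mathbb{R}\cup\{+\infty\}$ and the nonnegativity statement is meaningful --- this follows from the usual continuity/measurability properties of $W$ and $W^*$ together with the Sobolev regularity built into the definitions of $KA$ and $SA$. I would also remark in passing that neither the compatibility condition \eqref{comp} nor the equilibrium condition \eqref{equi} is actually used in this bound: the nonnegativity holds for any pair of fields for which the integral makes sense, and the role of $KA$ and $SA$ will only become essential later, when $\Psi$ is used to bound the discretization error against the exact solution.
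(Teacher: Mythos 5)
Your argument is exactly the one the paper intends: the proposition is presented as an immediate consequence of the Fenchel--Young inequality applied pointwise with $\vb*{\epsilon}=\nabla\hat{\vb*{u}}$ and $\vb*{\tau}=\hat{\vb*{\sigma}}$ and then integrated over $\Omega$. Your added remarks on measurability and on the fact that the admissibility conditions are not actually needed for nonnegativity are correct and slightly more careful than the paper's one-line justification, but the route is the same.
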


To evaluate the errors between the admissible fields $ (\hat{\vb*{u}},\hat{\vb*{\sigma}})\in KA\times SA$ and the exact fields $(\vb*{u},\vb*{\sigma})$, two error functionals are defined as follows.

\begin{definition}[Error functionals]\label{errorfunc}
	Error functionls $\phi: [H^1(\Omega)]^d \to \mathbb{R}$ and $\varphi : [H(\mathrm{div}, \Omega)]^d \to \mathbb{R}$ are introduced in the following form:
	\begin{equation}\label{errdef}
	\begin{split}
	& \phi(\vb*{e})= \int_\Omega\left[ W(\nabla(\vb*{u}+\vb*{e}))-W(\nabla\vb*{u})-\vb*{\sigma}:\nabla\vb*{e}\right]\dd\Omega\,,\quad  \boldsymbol{e}\in [H^1(\Omega)]^d\,,\\
	& \varphi(\vb*{r})=\int_\Omega\left[W^*(\vb*{\sigma}+\vb*{r})-W^*(\vb*{\sigma})-\vb*{r}:\nabla\vb*{u} \right]\dd\Omega\,,\quad  \vb*{r}\in [H(\mathrm{div}, \Omega)]^d\,,
	\end{split}
	\end{equation}
	where $(\vb*{u},\vb*{\sigma})$ is the exact solution pair of the elasticity problem.
\end{definition}

\begin{proposition}
	The error functionals $\phi$ and $\varphi$ have the following properties:
	\begin{equation}\label{assum}
	\phi(\hat{\vb*{u}}-\vb*{u})\geq 0\,,\quad \varphi(\hat{\vb*{\sigma}}-\vb*{\sigma})\geq 0\,,\quad \forall (\hat{\vb*{u}},\hat{\vb*{\sigma}})\in KA\times SA\,.
	\end{equation}
\end{proposition}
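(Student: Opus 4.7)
The plan is to reduce both inequalities to a pointwise tangent-plane estimate coming from the convexity of the stored-energy densities, then integrate over $\Omega$. The admissibility requirements in Definition~\ref{errorfunc} enter only through the domains of $\phi$ and $\varphi$; once $\hat{\vb*{u}}-\vb*{u}$ and $\hat{\vb*{\sigma}}-\vb*{\sigma}$ are admissible increments, the nonnegativity itself follows purely from convexity of $W$ (resp.\ $W^*$) and from the constitutive relation \eqref{cr} evaluated at the exact pair $(\vb*{u},\vb*{\sigma})$.

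For $\phi$, I will apply the first-order characterization of convexity of the differentiable function $W$: for any $\vb*{\epsilon}_1,\vb*{\epsilon}_2\in\mathbb{R}^{d\times d}$,
\begin{equation*}
W(\vb*{\epsilon}_2)-W(\vb*{\epsilon}_1)\geq \frac{\partial W(\vb*{\epsilon}_1)}{\partial\vb*{\epsilon}_1}:(\vb*{\epsilon}_2-\vb*{\epsilon}_1).
\end{equation*}
Choosing $\vb*{\epsilon}_1=\nabla\vb*{u}$ and $\vb*{\epsilon}_2=\nabla\vb*{u}+\nabla\vb*{e}$, and substituting $\vb*{\sigma}=\partial W/\partial(\nabla\vb*{u})$ from \eqref{cr}, the integrand in the defining expression \eqref{errdef} for $\phi(\vb*{e})$ is nonnegative a.e.\ in $\Omega$; integration then yields $\phi(\hat{\vb*{u}}-\vb*{u})\geq 0$.

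The argument for $\varphi$ is dual. Because $W^*$ is defined via the Legendre transform \eqref{lt}, it is automatically convex, so the analogous tangent-plane inequality applies. The only nontrivial step, and the main (if mild) obstacle in the whole proof, is to recognise the dual constitutive identity $\nabla\vb*{u}=\partial W^*(\vb*{\sigma})/\partial\vb*{\sigma}$ at the exact pair. This follows from standard properties of the Legendre transform: under convexity (and sufficient regularity, e.g.\ strict convexity) of $W$, the map $\vb*{\epsilon}\mapsto\partial W/\partial\vb*{\epsilon}$ is invertible with inverse $\vb*{\tau}\mapsto\partial W^*/\partial\vb*{\tau}$, equivalently, the Fenchel-Young inequality attains equality at $(\vb*{\epsilon},\vb*{\tau})$ exactly when $\vb*{\tau}=\partial W/\partial\vb*{\epsilon}$, which is the very identity already used in the paper immediately after \eqref{lt}. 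Applying the convexity inequality for $W^*$ at $\vb*{\sigma}$ with increment $\vb*{r}$ and substituting this dual identity makes the integrand of $\varphi(\vb*{r})$ in \eqref{errdef} pointwise nonnegative, so integration gives $\varphi(\hat{\vb*{\sigma}}-\vb*{\sigma})\geq 0$, completing the proof.
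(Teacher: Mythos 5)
Your proof is correct, but it is packaged differently from the paper's. The paper proves both inequalities in one line by observing that $\phi(\hat{\vb*{u}}-\vb*{u})=\Psi(\hat{\vb*{u}},\vb*{\sigma})-\Psi(\vb*{u},\vb*{\sigma})=\Psi(\hat{\vb*{u}},\vb*{\sigma})$ and $\varphi(\hat{\vb*{\sigma}}-\vb*{\sigma})=\Psi(\vb*{u},\hat{\vb*{\sigma}})-\Psi(\vb*{u},\vb*{\sigma})=\Psi(\vb*{u},\hat{\vb*{\sigma}})$, i.e., each error functional is itself a CRE evaluated at a mixed pair (one approximate field, one exact field), and then invoking Proposition~3.1, which is the integrated Fenchel--Young inequality; the fact that $\Psi(\vb*{u},\vb*{\sigma})=0$ is just the equality form of the constitutive relation stated after \eqref{lt}. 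Your argument is the same inequality in differential clothing: the tangent-plane estimate for $W$ at $\nabla\vb*{u}$ is exactly Fenchel--Young at $(\nabla\hat{\vb*{u}},\vb*{\sigma})$ minus its equality case at $(\nabla\vb*{u},\vb*{\sigma})$, so for $\phi$ the two routes coincide in substance. For $\varphi$ they genuinely diverge: the paper uses Fenchel--Young at $(\nabla\vb*{u},\hat{\vb*{\sigma}})$ and never needs the dual identity $\nabla\vb*{u}=\partial W^*(\vb*{\sigma})/\partial\vb*{\sigma}$, whereas you must establish that identity (and hence differentiability of $W^*$, i.e., effectively strict convexity of $W$) before the convexity inequality for $W^*$ applies. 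You justify this correctly via the equality case of Fenchel--Young, so there is no gap, but the paper's route is both shorter (it reuses Proposition~3.1) and marginally more general, since it requires only convexity of $W$, not smoothness or invertibility of the constitutive map. What your version buys is a self-contained pointwise argument that does not lean on the CRE identity, which some readers may find more transparent.
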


\noindent \emph{Proof.} It can be verified that $\phi(\hat{\vb*{u}}-\vb*{u})=\Psi(\hat{\vb*{u}},\vb*{\sigma})-\Psi({\vb*{u}},\vb*{\sigma})=\Psi(\hat{\vb*{u}},\vb*{\sigma})$ and $\varphi(\hat{\vb*{\sigma}}-\vb*{\sigma})=\Psi(\vb*{u},\hat{\vb*{\sigma}})-\Psi(\vb*{u},{\vb*{\sigma}})=\Psi(\vb*{u},\hat{\vb*{\sigma}})$, both being not less than zero due to Proposition 3.1. The equality holds true if and only if $\hat{\vb*{u}}={\vb*{u}}$ and $\hat{\vb*{\sigma}}={\vb*{\sigma}}$, respectively. ~~~~~$\square$

\begin{proposition}
	The CRE and the error functionals in Definition \ref{errorfunc} are interlinked by the following identity:
	\begin{equation}\label{hypsplit}
	\Psi(\hat{\boldsymbol{u}},\hat{\boldsymbol{\sigma}})=  \phi(\hat{\boldsymbol{u}}-\boldsymbol{u})+\varphi(\hat{\boldsymbol{\sigma}}-\boldsymbol{\sigma})\geq
	\begin{cases}
	\phi(\hat{\boldsymbol{u}}-\boldsymbol{u})\,,\\
	\varphi(\hat{\boldsymbol{\sigma}}-\boldsymbol{\sigma})\,,
	\end{cases}
	\quad \forall (\hat{\boldsymbol{u}},\hat{\boldsymbol{\sigma}})\in KA\times SA\,,
	\end{equation}
	i.e., the CRE can be represented as the sum of two error functionals of the admissible solutions.
\end{proposition}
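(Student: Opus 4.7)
The plan is to derive the identity by a direct expansion of $\phi(\hat{\vb*{u}} - \vb*{u}) + \varphi(\hat{\vb*{\sigma}} - \vb*{\sigma})$ from Definition \ref{errorfunc} and reduce everything to a single orthogonality identity between the residuals. Once the equality is in hand, the two inequalities $\Psi \geq \phi$ and $\Psi \geq \varphi$ follow immediately from the nonnegativity of the error functionals already established in \eqref{assum}.

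First I would substitute the two expressions in \eqref{errdef} and collect integrands. Because the exact pair $(\vb*{u}, \vb*{\sigma})$ satisfies the constitutive relation \eqref{cr}, the Fenchel--Young inequality is saturated pointwise, so $W(\nabla \vb*{u}) + W^*(\vb*{\sigma}) = \vb*{\sigma}:\nabla \vb*{u}$. Using this identity to eliminate the $W(\nabla\vb*{u}) + W^*(\vb*{\sigma})$ contribution against one copy of $\vb*{\sigma}:\nabla\vb*{u}$ collapses the sum to
\[
\phi(\hat{\vb*{u}}-\vb*{u}) + \varphi(\hat{\vb*{\sigma}}-\vb*{\sigma}) \;=\; \Psi(\hat{\vb*{u}}, \hat{\vb*{\sigma}}) \;+\; \int_\Omega (\hat{\vb*{\sigma}} - \vb*{\sigma}) : \nabla(\hat{\vb*{u}} - \vb*{u}) \,\dd\Omega .
\]
Thus the proposition is equivalent to the orthogonality of the stress and displacement residuals in the natural $L^2$ duality pairing.

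The main step, and essentially the only nonalgebraic one, is this orthogonality, which I would establish by the divergence theorem in the $H^1$--$H(\mathrm{div})$ duality. Setting $\vb*{r} = \hat{\vb*{\sigma}} - \vb*{\sigma}$ and $\vb*{e} = \hat{\vb*{u}} - \vb*{u}$, membership in the admissibility classes forces $\mathrm{div}\,\vb*{r} = \vb*{0}$ in $\Omega$ and $\vb*{r}\vb*{n} = \vb*{0}$ on $\Gamma_N$ (since $\hat{\vb*{\sigma}}$ and $\vb*{\sigma}$ share the data $\vb*{f}$ and $\vb*{t}$ in $SA$), while $\vb*{e} = \vb*{0}$ on $\Gamma_D$ (since $\hat{\vb*{u}}$ and $\vb*{u}$ share $\vb*{u}_D$ in $KA$). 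Integration by parts then yields
\[
\int_\Omega \vb*{r} : \nabla \vb*{e} \,\dd\Omega \;=\; -\int_\Omega (\mathrm{div}\,\vb*{r})\cdot \vb*{e} \,\dd\Omega \;+\; \int_{\Gamma_D \cup \Gamma_N} (\vb*{r}\vb*{n})\cdot \vb*{e} \,\dd S \;=\; 0,
\]
where the volume term vanishes by equilibrium of $\vb*{r}$, the $\Gamma_D$-piece vanishes by the homogeneous trace of $\vb*{e}$, and the $\Gamma_N$-piece vanishes by the traction-free condition on $\vb*{r}$. The hard part is precisely this orthogonality: it is conceptually the \emph{Prager--Synge} step and it is the only place where the admissibility requirements fixed in Definition \ref{errorfunc} (equilibrium of $\hat{\vb*{\sigma}}$ and boundary compatibility of $\hat{\vb*{u}}$) are genuinely used. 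The anticipated technicality is only verifying that $\vb*{r} \in [H(\mathrm{div}, \Omega)]^d$ has a normal trace on $\Gamma_N$ of sufficient regularity to pair with $\vb*{e} \in [H^1(\Omega)]^d$ in the boundary integral; this is a standard application of the $H(\mathrm{div})$ trace theorem.
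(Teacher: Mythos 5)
Your proposal is correct and follows essentially the same route as the paper's proof: the same algebraic expansion using the exact pair's saturation of the Fenchel--Young inequality to reduce the sum to $\Psi$ plus the cross term $\int_\Omega(\hat{\vb*{\sigma}}-\vb*{\sigma}):\nabla(\hat{\vb*{u}}-\vb*{u})\,\dd\Omega$, followed by the same Green's-formula (Prager--Synge) orthogonality argument using the admissibility conditions, and the same appeal to the nonnegativity of $\phi$ and $\varphi$ for the inequalities. Your added remark on the $H(\mathrm{div})$ normal-trace pairing is a reasonable technical refinement that the paper leaves implicit.
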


\noindent \emph{Proof.} From the definitions of error functionals \eqref{errdef} and that of the CRE \eqref{CREhyp}, one has
\begin{equation*}
\begin{split}
&~ \phi(\hat{\boldsymbol{u}}-\boldsymbol{u})+\varphi(\hat{\boldsymbol{\sigma}}-\boldsymbol{\sigma}) \\
= &~\int_\Omega \left[ \left(W(\nabla\hat{\vb*{u}})+W^*(\hat{\vb*{\sigma}})\right)- \left(W(\nabla\vb*{u})+W^*({\vb*{\sigma}})\right)
-\vb*{\sigma}:\nabla(\hat{\vb*{u}}-\vb*{u}) - (\hat{\vb*{\sigma}}-\vb*{\sigma}):\nabla\vb*{u} \right]\dd\Omega \\
= &~\left(\Psi(\hat{\boldsymbol{u}},\hat{\boldsymbol{P}})+\int_\Omega \hat{\vb*{\sigma}}:\nabla\hat{\vb*{u}}~\dd\Omega \right) - \int_\Omega \vb*{
	\sigma}:\nabla \vb*{u} ~\dd \Omega -\int_\Omega \left[\vb*{\sigma}:\nabla(\hat{\vb*{u}}-\vb*{u}) + (\hat{\vb*{\sigma}}-\vb*{\sigma}):\nabla\vb*{u} \right]~\dd\Omega \\
= &~\Psi(\hat{\boldsymbol{u}},\hat{\boldsymbol{P}}) + \int_\Omega  (\hat{\vb*{\sigma}}-\vb*{\sigma}): \nabla (\hat{\vb*{u}}-\vb*{u})~\dd\Omega\,.
\end{split}
\end{equation*}
Green's formula gives that
\begin{equation}\label{Green}
\int_\Omega\boldsymbol{\tau}:\nabla \boldsymbol{v}~\dd\Omega=\int_{\Omega}-\mathrm{div}\boldsymbol{\tau}\cdot\boldsymbol{v}\dd{\Omega}+\int_{\Gamma}(\boldsymbol{\tau}\boldsymbol{n})\cdot\boldsymbol{v}\dd{\Gamma}\,,\quad (\boldsymbol{v},\boldsymbol{\tau})\in [H_1(\Omega)]^d \times [H(\mathrm{div},\Omega)]^d\,.
\end{equation}
Taking $\vb*{\tau}=\hat{\vb*{\sigma}}-\vb*{\sigma}$ and $\vb*{v}=\hat{\vb*{u}}-\vb*{u}$ yields that $\int_\Omega  (\hat{\vb*{\sigma}}-\vb*{\sigma}): \nabla (\hat{\vb*{u}}-\vb*{u})~\dd\Omega =0$, since $\mathrm{div}(\hat{\vb*{\sigma}}-\vb*{\sigma})=\vb{0}$ in $\Omega$, $(\hat{\vb*{\sigma}}-\vb*{\sigma})\vb*{n}=\vb{0}$ on $\Gamma_N$, and $\hat{\vb*{u}}-\vb*{u}=\vb{0}$ on $\Gamma_D$. Then \eqref{hypsplit} is obtained in view of the fact that $\phi(\hat{\boldsymbol{u}}-\boldsymbol{u})\geq 0$ and $\varphi(\hat{\boldsymbol{\sigma}}-\boldsymbol{\sigma})\geq 0$. ~~~~~$\square$

In linear elasticity under small deformation, the difference between deformed and undeformed configurations is not taken into consideration, and the first Piola-Kirchhoff stress tensor coincides with the Cauchy stress tensor. The potential and complementary energy densities are written as $W(\nabla \vb*{u})=\frac{1}{2}\nabla \vb*{u}:\vb*{K}:\nabla \vb*{u}$ and $W^*(\vb*{\sigma})=\frac{1}{2}\vb*{\sigma}:\vb*{K}^{-1}:\vb*{\sigma}$, respectively. The error functionals for both the displacement and stress fields are formulated as $\phi(\vb*{e})=\int_\Omega \frac{1}{2}\nabla \vb*{e}:\vb*{K}:\nabla \vb*{e}~\dd\Omega$ and $\varphi(\vb*{r})=\int_\Omega \frac{1}{2}\vb*{r}:\vb*{K}^{-1}:\vb*{r}~\dd\Omega$, respectively, $(\vb*{e},\vb*{r})\in (KA-\vb*{u})\times (SA-\vb*{\sigma})$. Furthermore, the CRE has a second-order form $\Psi(\hat{\boldsymbol{u}},\hat{\boldsymbol{\sigma}}) = \int_\Omega \frac{1}{2}(\hat{\vb*{\sigma}}-\vb*{K}:\nabla\hat{\vb*{u}}):\vb*{K}^{-1}:(\hat{\vb*{\sigma}}-\vb*{K}:\nabla\hat{\vb*{u}})~\dd\Omega$, $(\hat{\vb*{u}},\hat{\vb*{\sigma}})\in KA\times SA$. Moreover, we refer to \cite{guo2016legendre} for a more generalized discussion on CREs in convex problems.

\subsection{Bounding global errors in PINN solutions}

When $\eta$ is set to be a small value, i.e., $\eta \ll 1$, the compatibility constraint \eqref{comp} and the equilibrium condition \eqref{equi} are enforced on $\vb*{u}^\texttt{NN}$ and $\vb*{\sigma}^\texttt{NN}$, respectively, in the sense of minimizing the corresponding penalty terms in $\mathcal{L}$. By taking $\hat{\vb*{u}}\simeq\vb*{u}^\texttt{NN}$ and $\hat{\vb*{\sigma}}\simeq\vb*{\sigma}^\texttt{NN}$, the CRE $\Psi$ provides an error bound for both $\vb*{u}^\texttt{NN}$ and $\vb*{\sigma}^\texttt{NN}$, i.e.,
\begin{equation}\label{upperbd}
\phi(\vb*{u}^\texttt{NN}-\vb*{u})\lesssim \Psi(\vb*{u}^\texttt{NN},\vb*{\sigma}^\texttt{NN}), \quad \text{and}\quad \varphi(\vb*{\sigma}^\texttt{NN}-\vb*{\sigma})\lesssim \Psi(\vb*{u}^\texttt{NN},\vb*{\sigma}^\texttt{NN})\,,
\end{equation}
where the global discretization errors in the neural network solutions $\vb*{u}^\texttt{NN}$ and $\vb*{\sigma}^\texttt{NN}$ are measured by the functionals $\phi$ and $\varphi$, respectively. From the perspective of neural network training, $\phi$ and $\varphi$ are adopted to quantify the global generalization errors in networks $\vb*{u}^\texttt{NN}$ and $\vb*{\sigma}^\texttt{NN}$, respectively. As the solutions $(\vb*{u}^\texttt{NN},\vb*{\sigma}^\texttt{NN})$ obtained by minimizing the loss function $\mathcal{L}$ only approximately satisfy the compatibility constraint \eqref{comp} and the equilibrium condition \eqref{equi}, we use the symbol $\lesssim$ for the bounding property \eqref{upperbd} in an approximate sense.

In the CRE estimation for the displacement-based, conforming finite element method, it is required to make additional efforts to construct a stress field that satisfies the equilibrium equation \eqref{equi}. Such stress recovery techniques \cite{ladeveze2005mastering,pled2011techniques,gallimard2009constitutive} are usually based on element-wise construction and implemented at a high computational price. However, the mixed form of PINNs gives the admissible solution pair $(\vb*{u}^\texttt{NN},\vb*{\sigma}^\texttt{NN})$ simultaneously, then the CRE can be directly applied without additional computational efforts. When a larger amount of training data or a more sophisticated neural network architecture is adopted, the accuracy of $\vb*{u}^\texttt{NN}$ and $\vb*{\sigma}^\texttt{NN}$ can be improved simultaneously through minimizing the loss function $\mathcal{L}$ in \eqref{MSE}, and such asymptotic behavior can be indicated by the decay of the CRE value $\Psi$ in the meantime.

\section{Numerical example}

To demonstrate the proposed error bounds, we consider a square plate defined in the domain $\Omega = ]0,1[^2$ with unit side length $L_x=L_y=1$, introduced in \cite{haghighat2020deep}. Let us identify different sides of the square with $\Gamma_{x^-}=\{\vb*{X}=(x,y)^\text{T}| x=0, y\in[0,1]\}$, $\Gamma_{x^+}=\{\vb*{X}=(x,y)^\text{T}| x=1, y\in[0,1]\}$, $\Gamma_{y^-}=\{\vb*{X}=(x,y)^\text{T}| y=0, x\in[0,1]\}$, and $\Gamma_{y^+}=\{\vb*{X}=(x,y)^\text{T}| y=1, x\in[0,1]\}$. The plate is subjected to the following boundary conditions: (a) $\sigma_{xx}=0$ and $u_y=0$ on $\Gamma_x^-$ and $\Gamma_x^+$, (b) $u_x=u_y=0$ on $\Gamma_y^-$, and (c) $u_x=0$ and $\sigma_{yy}=(\lambda + 2\mu)Q\sin(\pi x)$ on $\Gamma_y^+$. Additionally, the plate is subjected to the following body forces:
\begin{equation*}
\begin{split}
f_x(x,y) = & \lambda \left[4\pi^2 \cos(2\pi x) \sin(\pi y) - \pi \cos(\pi x) Q y^3 \right]  \\
& + \mu \left[9\pi^2 \cos(2\pi x) \sin(\pi y) - \pi \cos(\pi x) Q y^3 \right], \\
f_y(x,y) = & \lambda \left[-3\sin(\pi x) Qy^2 +2\pi^2 \sin(2\pi x) \cos(\pi y) \right] \\
& + \mu \left[-6\sin(\pi x) Qy^2 +2\pi^2\sin(2\pi x)\cos(\pi y) + \pi^2\sin(\pi x) Q y^4/4 \right]. 
\end{split}
\end{equation*}
Here $\lambda$ and $\mu$ are the two Lamé parameters of a homogeneous and isotropic material, and we take $Q = 4$. The analytical solution to this problem is given as 
\begin{equation}
u_x(x,y) =\cos(2\pi x) \sin(\pi y) \quad \text{and}\quad u_y(x,y)=\sin(\pi x) Qy^4/4
\end{equation}
and plotted in Fig. \ref{fig:solution}.

\begin{figure}[H]
	\centering
	\includegraphics[width=1\textwidth]{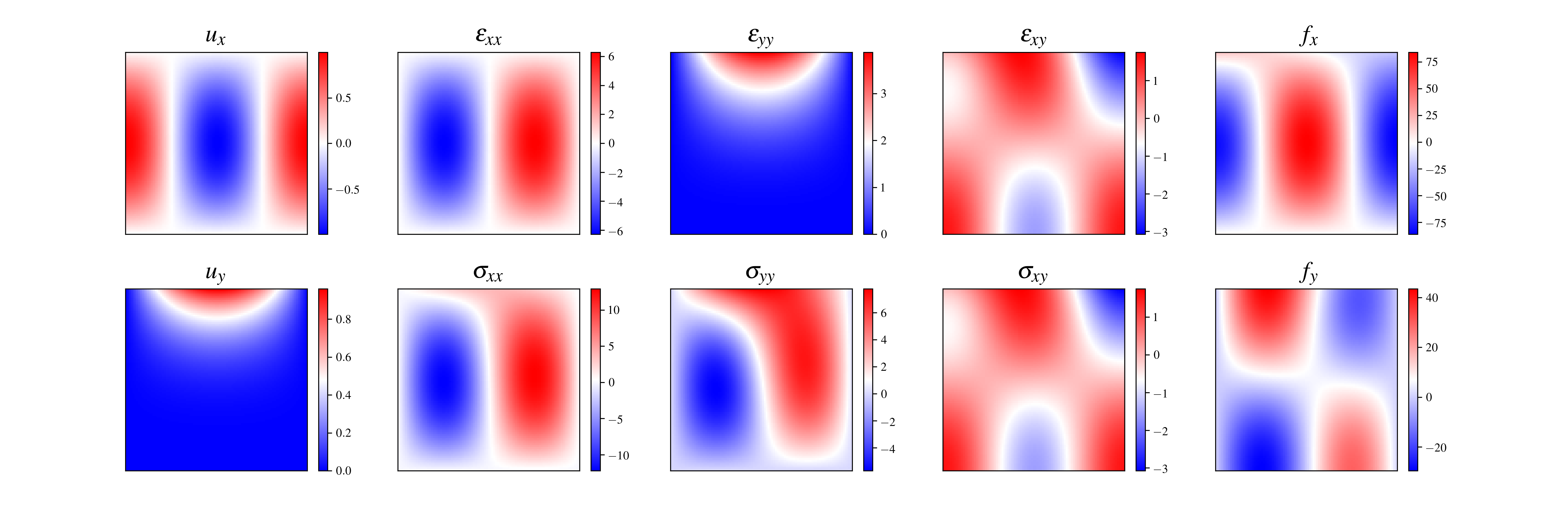}
	\caption{The exact displacement components $(u_x, u_y)$, strain components $(\varepsilon_{xx}, \varepsilon_{yy}, \varepsilon_{xy})$, and stress components $(\sigma_{xx}, \sigma_{yy}, \sigma_{xy})$ for the linear elasticity problem, subjected to the body forces $(f_x, f_y)$. }
	\label{fig:solution}
\end{figure}

In this example, the PINN framework is employed in the following two scenarios: 
\begin{itemize}
	\item \textbf{scenario (a): PINN as a solution method}, i.e., $\alpha = 0$. The problem is considered as a boundary value problem, where data are given at the boundaries in the form of displacement or stress conditions. A sampling grid of $100\times 100$ is used for the collocation of equilibrium equations and constitutive relations, except when we change the sampling grid to investigate the corresponding asymptotic behaviors. Moreover, we take $\eta = 0.01$ in this scenario.
	\item \textbf{scenario (b): PINN as a regression method}, i.e., $\alpha = 1$. It is assumed that the displacement and stress data are given at uniformly distributed grid points, and the objective is to construct a regression model that also satisfies the underlying physics of linear elasticity. Here the data are given on a uniform grid of $40 \times 40$, except when we change the sampling grid to investigate the corresponding asymptotic behaviors. Since the grid data for both displacements and stresses already impose very strong constraints on the solution fields, we take $\eta=0$ in this scenario, i.e., loosing the constraints by constitutive relations.
\end{itemize}

There are three types of errors associated with the training of neural networks, including the optimization error, the generalization error, and the approximation error, as discussed in \cite{shin2020convergence}. Here, we quantify the global generalization errors in the displacement and stress fields by $\phi(\vb*{u}^\texttt{NN}-\vb*{u})$ and $\varphi(\vb*{\sigma}^\texttt{NN}-\vb*{\sigma})$, respectively. All these integrals are approximately evaluated on a fine \emph{test} grid of $200 \times 200$. We consider the following three cases:

\begin{itemize}
	\item \textbf{Generalization error in sampling}. As the first case, we quantify the generalization errors in the displacement and stress fields with respect to the number of sampling (collocation) points used for optimizing the loss function. Here, we adopt a fixed size neural network with 4 hidden layers and 20 neurons in each layer to construct all the approximate solutions $u_x^\texttt{NN}$,  $u_y^\texttt{NN}$, $\sigma_{xx}^\texttt{NN}$, $\sigma_{yy}^\texttt{NN}$, and $\sigma_{xy}^\texttt{NN}$. We then use seven grids of sizes $40\times40$, $50\times 50$, $60\times60$, $70\times 70$, $80\times80$, $90\times 90$, and $100\times100$ to sample the loss function during the optimization and train the neural networks. 
	
	\item \textbf{Generalization error in neurons}. As the second case, we study the generalization errors in the displacement and stress solutions with respect to the number of neurons per each layer. For this purpose, we fix the number of hidden layers of each network to 4 and change the width of the network as 20, 30, 40, 50, 60, 70, and 80. 
	
	\item \textbf{Generalization error in layers}. As the last case, we study the generalization errors in the displacement and stress solutions with respect to the number of hidden layers. Therefore, we adopt a fixed number of neurons for each hidden layer as 20 and vary the number of hidden layers from 2 to 10 in scenario (a), while 4 to 10 in scenario (b). 
\end{itemize}

The comparisons of global discretization errors $\phi(\vb*{u}^\texttt{NN}-\vb*{u})$,  $\varphi(\vb*{\sigma}^\texttt{NN}-\vb*{\sigma})$ and the CRE $\Psi(\vb*{u}^\texttt{NN},\vb*{\sigma}^\texttt{NN})$ are shown in Figs. \ref{fig:bvpresults} and \ref{fig:rgnresults} for the scenarios (a) and (b), respectively. All results are compiled on a \emph{test} grid of size $200\times200$, with which the integrals are approximately calculated. Note that this distinguishes from the classical finite element method in which the results are analyzed at the same locations as they are evaluated, i.e., at the nodes or quadrature points. In all the cases considered in this example, it is verified that the CRE guarantees an upper bound of the global generalization errors in both the displacement and stress solutions by PINNs. As seen in the stacked plots, the equality $\phi(\vb*{u}^\texttt{NN}-\vb*{u})+\varphi(\vb*{\sigma}^\texttt{NN}-\vb*{\sigma}) = \Psi(\vb*{u}^\texttt{NN},\vb*{\sigma}^\texttt{NN})$ is often not strictly satisfied and there exists a relative error within $15\%$. The reason mainly lies in the fact that the solution fields $(\vb*{u}^\texttt{NN},\vb*{\sigma}^\texttt{NN})$ obtained by minimizing $\mathcal{L}$ cannot strictly satisfy the corresponding admissible conditions, thus the term $\int_\Omega(\vb*{\sigma}^\texttt{NN}-\vb*{\sigma}):\nabla(\vb*{u}^\texttt{NN}-\vb*{u})~\dd\Omega$ cannot vanish (see the proof of Proposition 3.3). In spite of this, the CRE still effectively provides an upper bound of the global discretization errors and can be used as an indicator for the credibility of PINN solutions. 

In general, scenario (a) can be considered as an unsupervised learning approach, in which no training data is provided inside the domain, while scenario (b) is of the supervised learning class. The supervised learning method is well understood. With more training points, one can improve the accuracy of the predictions up to a limit depending on the network size. This is indeed what we can observe in the scenario (b), see Fig. \ref{rgnsample2}. The use of more training points reduces the generalization errors of the PINN model, and high accuracy can be achieved with very fine training grids. The width and depth of the network also show more predictable behaviour as a result of supervised learning, as shown in Figs. \ref{rgnneuron2} and \ref{rgnlayer2}. On the other hand, without imposing any data inside the domain, solving a PDE problem only by minimizing a loss function that incorporates the governing equations is an unsupervised learning task. This task is generally much harder and requires much more training time and iterations. We find that after a relatively small number of sampling points, the accuracy of predictions by a fixed network architecture can hardly be improved by adding more training points, see Fig. \ref{bvpsample2}. Further improvements demand a significant fine-tuning of the optimizer and many training epochs, which we try to avoid here. Such a behavior of PINN as a BVP solution method is also consistent with the observations made in other studies \cite{shin2020convergence}. In this case, the bounding property of the proposed CRE estimator is still guaranteed. Since the true errors are not computable, the CRE estimator can be evaluated for the observation of convergence behavior.

\begin{figure}[ht]
	\centering
	\subfigure[ $\phi/\Psi$ and $\varphi/\Psi$ versus training sample grid]{\includegraphics[width=.48\textwidth]{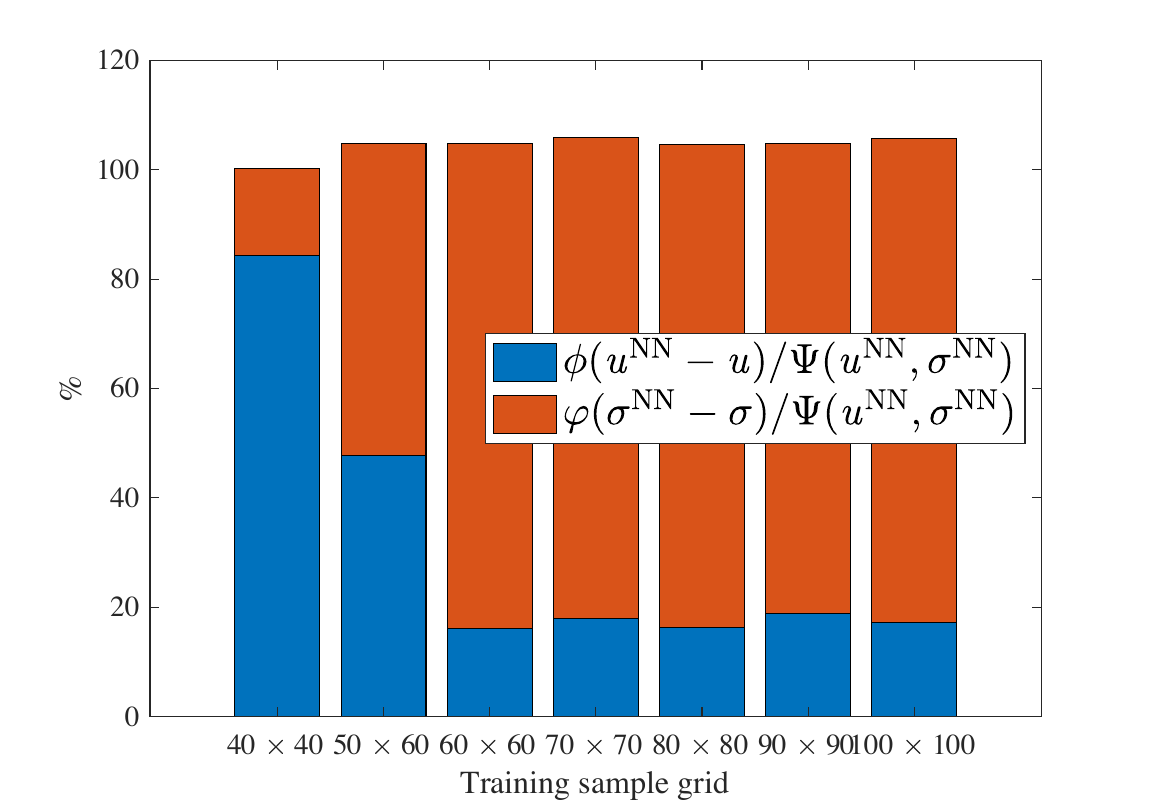}}
	\hfill
	\subfigure[Global generalization errors and CRE versus training sample grid]{\includegraphics[width=.48\textwidth]{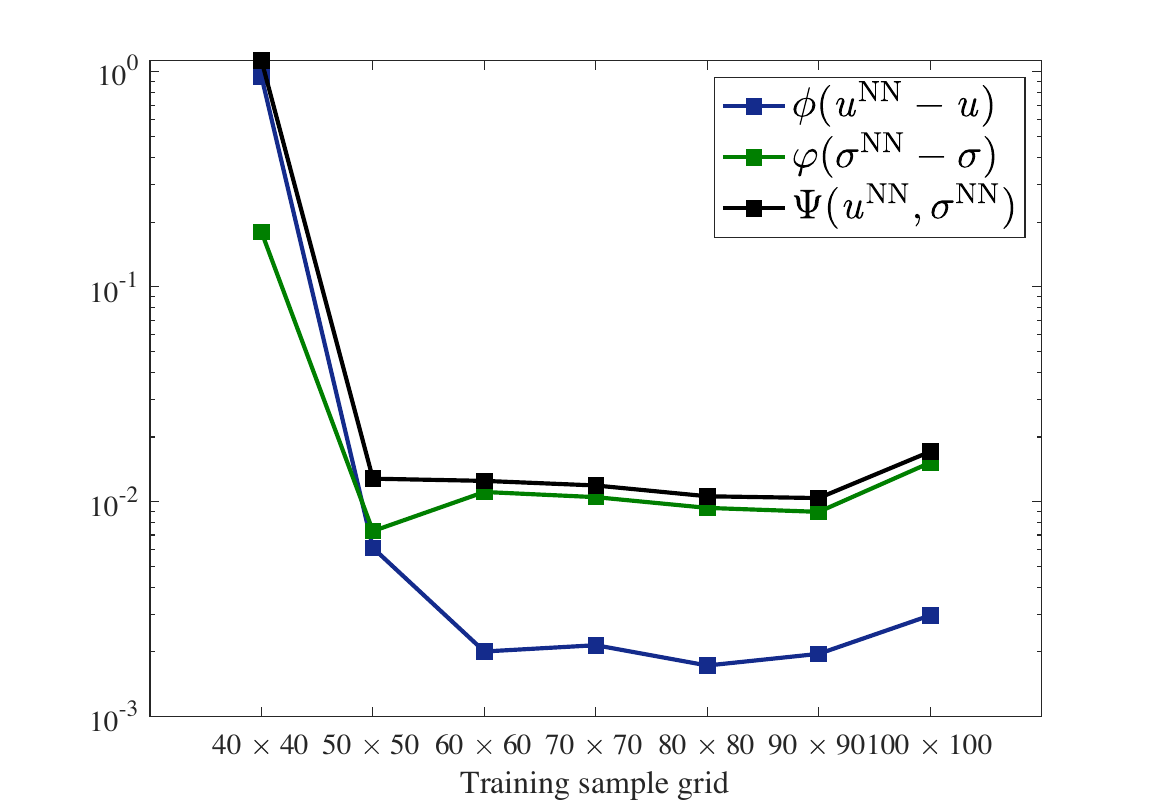}\label{bvpsample2}}

	\subfigure[$\phi/\Psi$ and $\varphi/\Psi$ versus number of neurons]{\includegraphics[width=.48\textwidth]{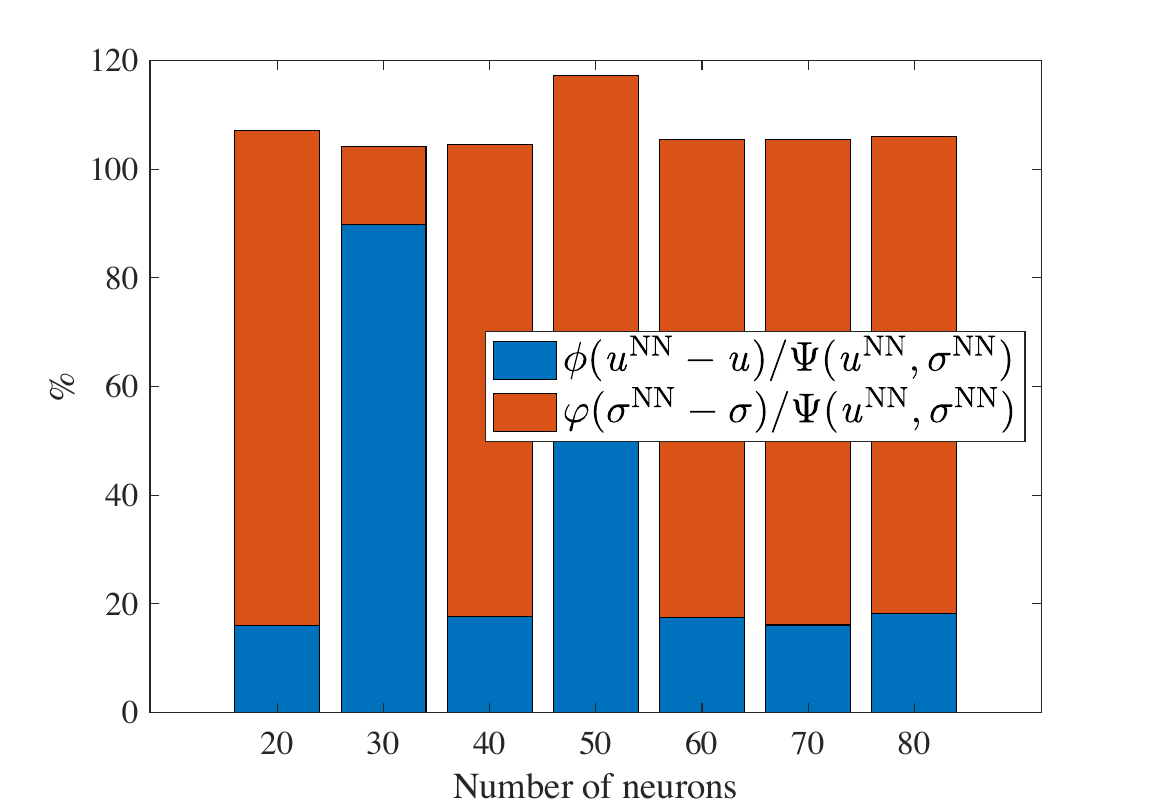}}
	\hfill
	\subfigure[Global generalization errors and CRE versus number of neurons]{\includegraphics[width=.48\textwidth]{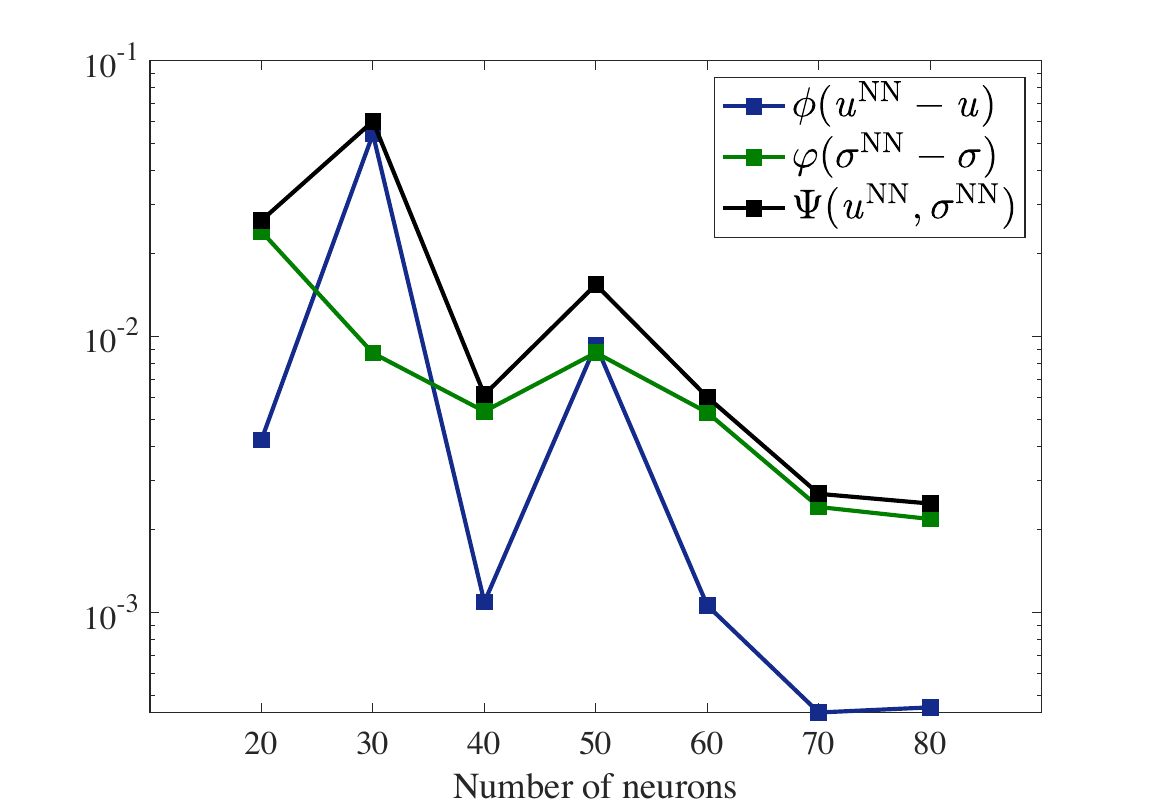}}

	\subfigure[$\phi/\Psi$ and $\varphi/\Psi$ versus number of layers]{\includegraphics[width=.48\textwidth]{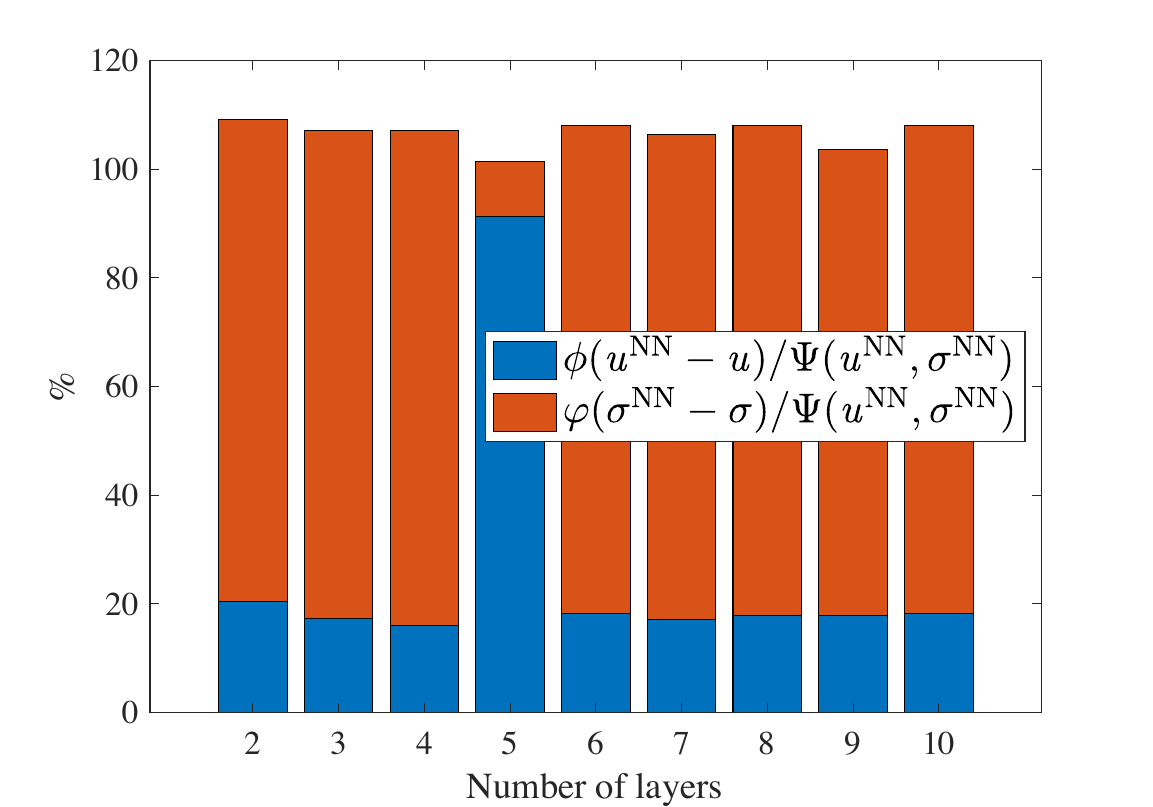}}
	\hfill
	\subfigure[Global generalization errors and CRE versus number of layers]{\includegraphics[width=.48\textwidth]{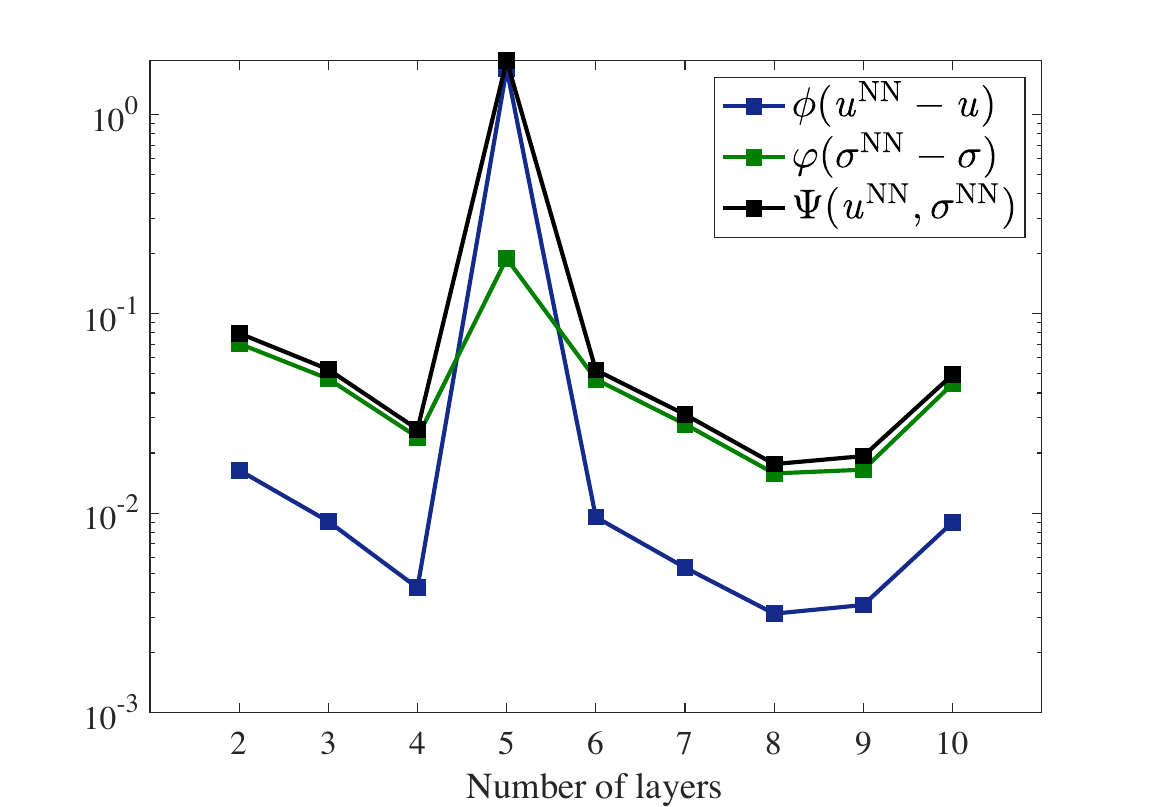}}

	\caption{Comparison of the energy error in displacements $\phi(\vb*{u}^\texttt{NN}-\vb*{u})$, the energy error in stresses $\varphi(\vb*{\sigma}^\texttt{NN}-\vb*{\sigma})$, and the CRE $\Psi(\vb*{u}^\texttt{NN},\vb*{\sigma}^\texttt{NN})$ in the scenario (a) where PINN is used as a BVP solution method.}
	\label{fig:bvpresults}
\end{figure}

\begin{figure}[ht]
	\centering
	\subfigure[ $\phi/\Psi$ and $\varphi/\Psi$ versus training sample grid]{\includegraphics[width=.48\textwidth]{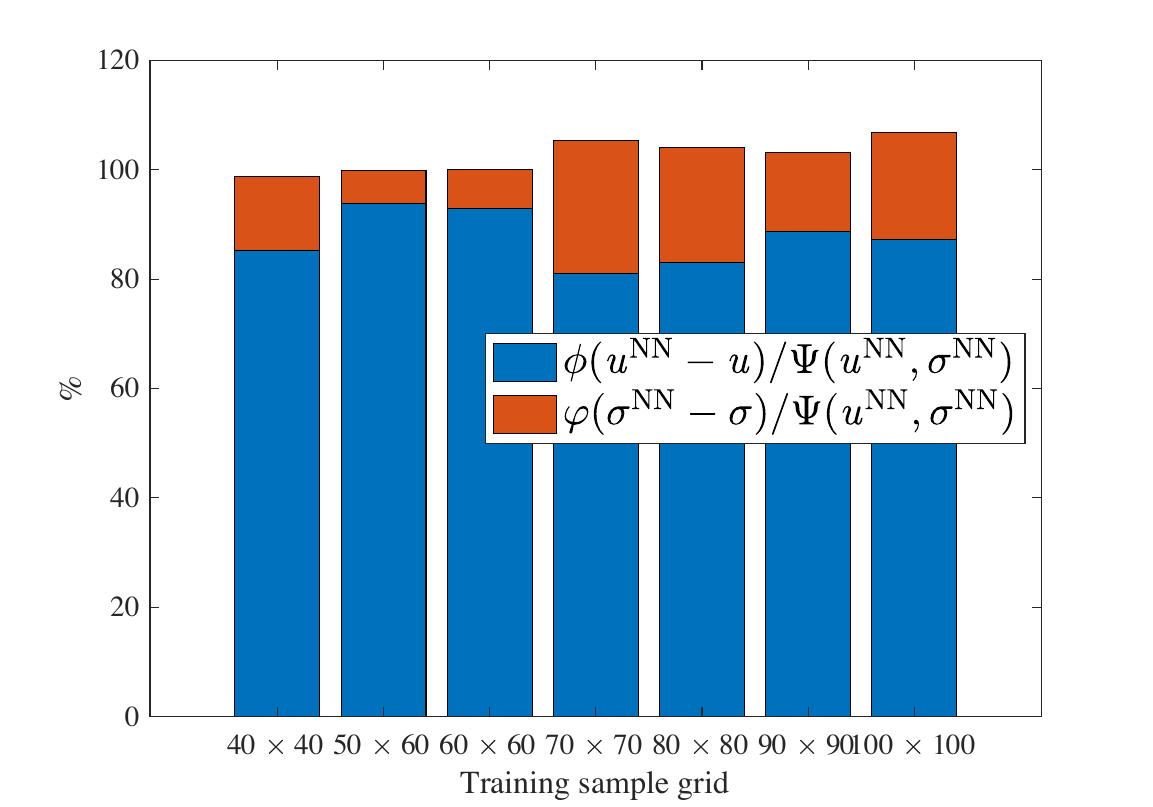}}
	\hfill
	\subfigure[Global generalization errors and CRE versus training sample grid]{\includegraphics[width=.48\textwidth]{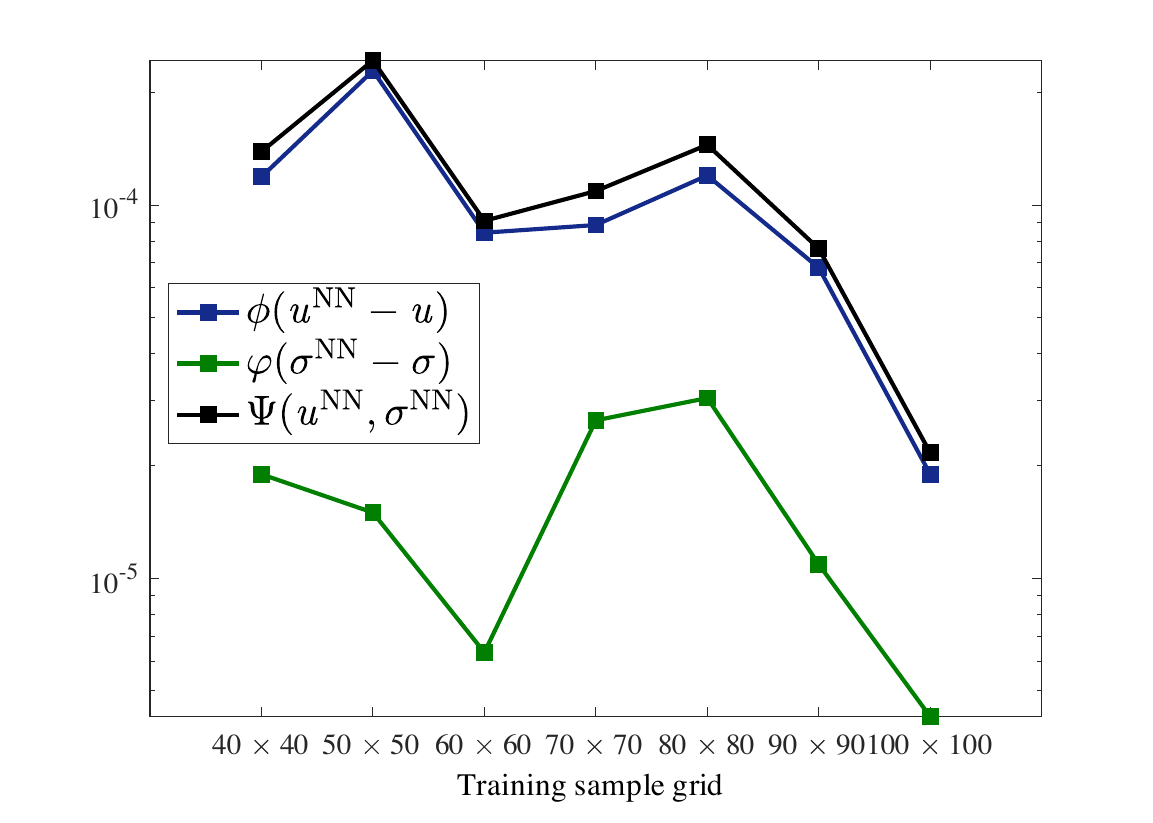}\label{rgnsample2}}

	\subfigure[$\phi/\Psi$ and $\varphi/\Psi$ versus number of neurons]{\includegraphics[width=.48\textwidth]{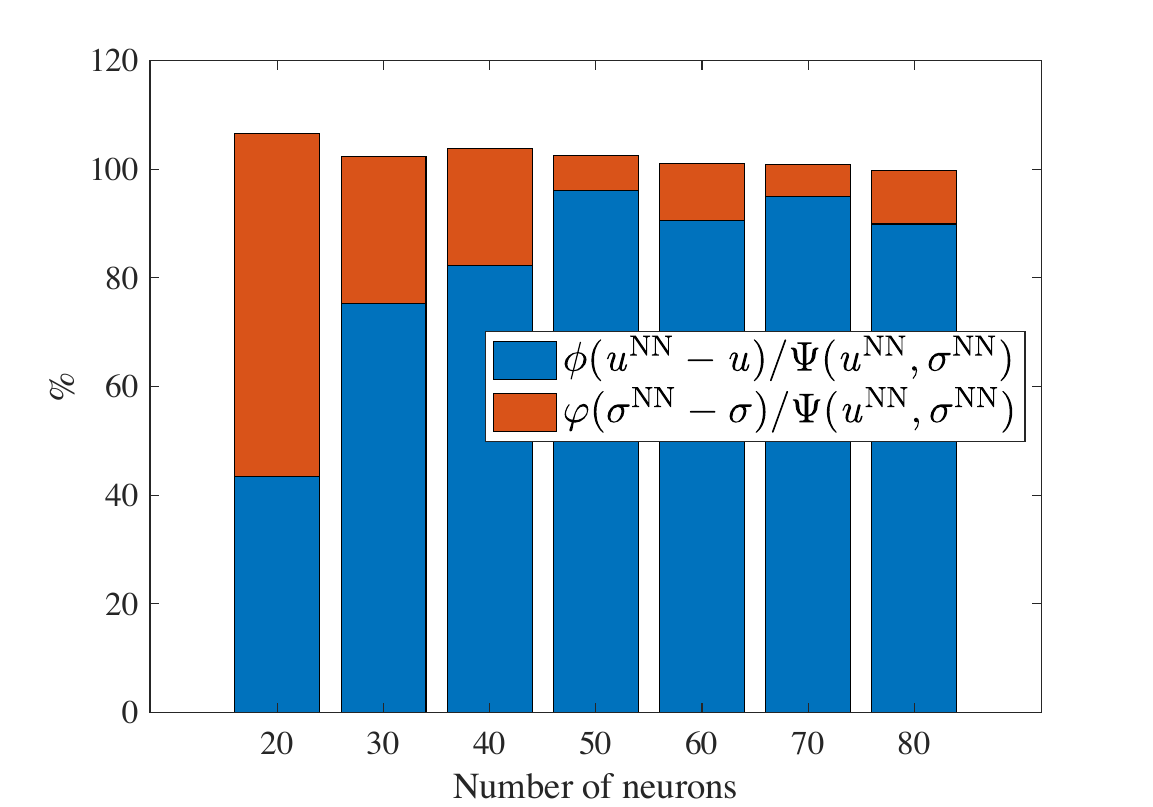}}
	\hfill
	\subfigure[Global generalization errors and CRE versus number of neurons]{\includegraphics[width=.48\textwidth]{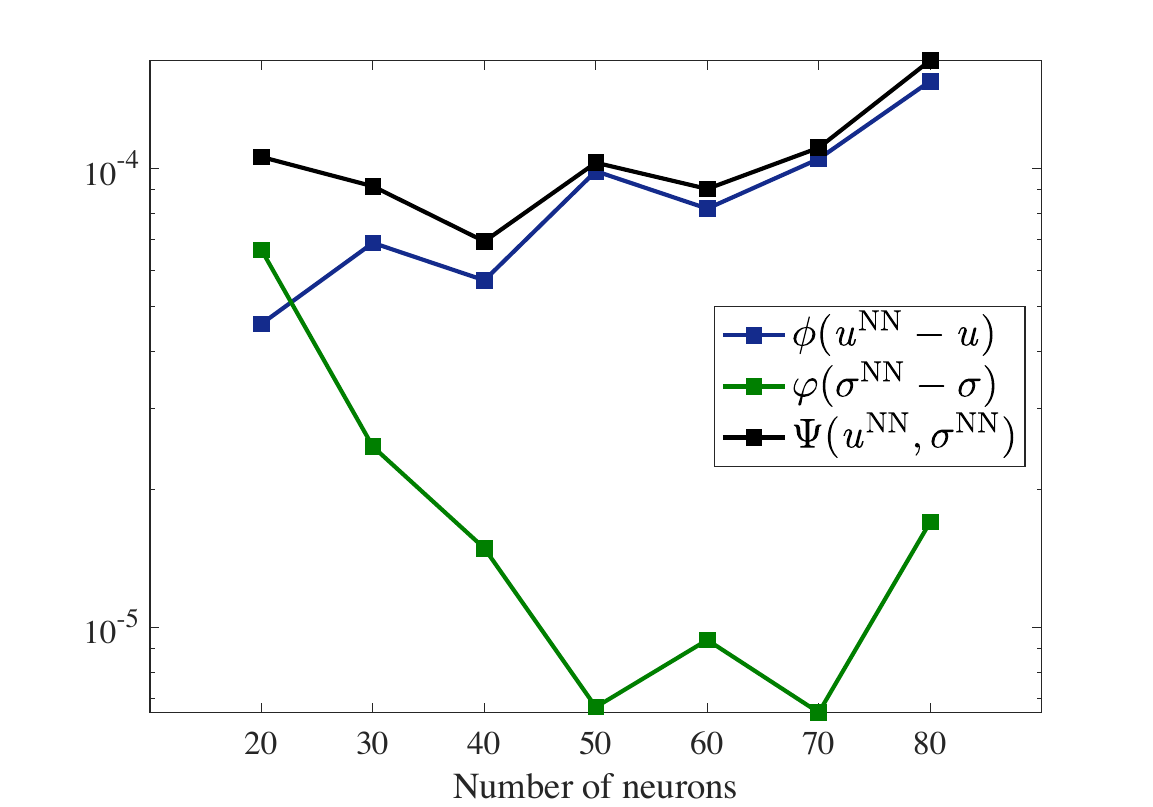}\label{rgnneuron2}}

	\subfigure[$\phi/\Psi$ and $\varphi/\Psi$ versus number of layers]{\includegraphics[width=.48\textwidth]{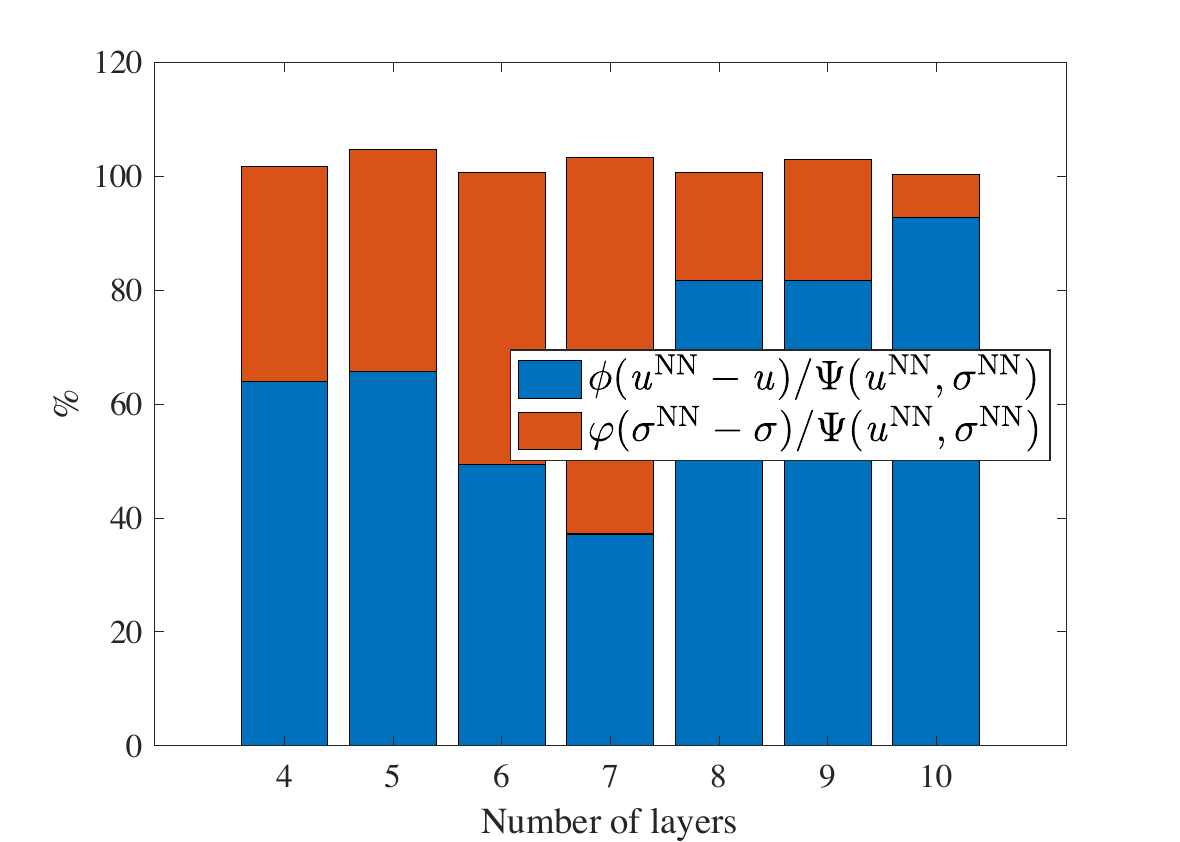}}
	\hfill
	\subfigure[Global generalization errors and CRE versus number of layers]{\includegraphics[width=.48\textwidth]{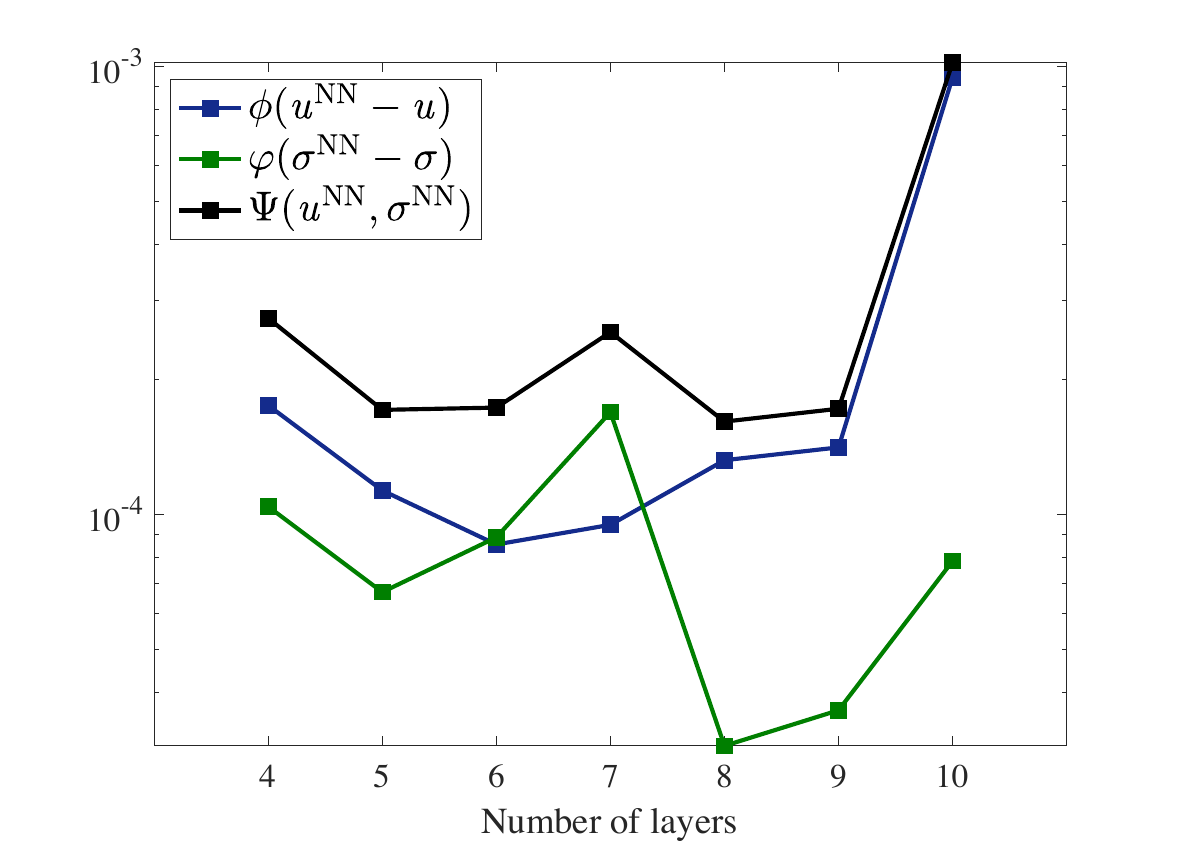}\label{rgnlayer2}}
	
	\caption{Comparison of the energy error in displacements $\phi(\vb*{u}^\texttt{NN}-\vb*{u})$, the energy error in stresses $\varphi(\vb*{\sigma}^\texttt{NN}-\vb*{\sigma})$, and the CRE $\Psi(\vb*{u}^\texttt{NN},\vb*{\sigma}^\texttt{NN})$ in the scenario (b) where PINN is used as a regression method.}
	\label{fig:rgnresults}
\end{figure}

\section{Conclusions}

In this note, we have briefly presented a preliminary work on \emph{a posteriori} error analysis for the PINN solutions of elasticity problems. An energy-based error bound, defined as a constitutive relation error, is employed to provide an upper bound of the global discretization errors in both the displacements and the stresses solved through a mix form of PINNs. The generalization performance of PINNs is thus assessed by the proposed error bound. In the numerical demonstration, a reference 2D linear elasticity problem is analyzed using PINNs as a forward solver and as a regression tool. Considering a wide range of network hyper parameters, i.e., number of  neurons and hidden layers, and number of sampling points, it has been verified that the proposed error bound guarantees an upper bound of the energy-based measures of generalization errors and offers good robustness. The constitutive relation error bound is also directly applicable for hyperelasticity problems, and can be used for goal-oriented error estimation in cooperation with an adjoint problem.

\clearpage

 \bibliographystyle{abbrv}
 \bibliography{refs}

\end{document}